\def\BibTeX{{\rm B\kern-.05em{\sc i\kern-.025em b}\kern-.08em
    T\kern-.1667em\lower.7ex\hbox{E}\kern-.125emX}}
\tikzstyle{block} = [draw, fill=white, rectangle, minimum height=0em, minimum width=0em]
\tikzstyle{output} = [coordinate]
\tikzstyle{input} = [coordinate]
\newtheorem{thm}{Theorem}[section]
\newtheorem{ass}{Assumption}
\newtheorem{defn}[thm]{Definition}
\newtheorem{lem}[thm]{Lemma}
\newtheorem{prop}[thm]{Proposition}
\newtheorem{algo}[thm]{Algorithm}
\newcommand{\setdef}[2]{\left\{\, #1\, \left|\, \vphantom{#1} #2 \right.\right\}}
\newcommand{\ddt}{\tfrac{\text{\normalfont d}}{\text{\normalfont d}t}}
\newcommand{\R}{\mathbb{R}}
\newcommand{\N}{\mathbb{N}}
\newcommand{\cD}{\mathcal{D}}
\newcommand{\cF}{\mathcal{F}}
\newcommand{\cN}{\mathcal{N}}
\newcommand{\cU}{\mathcal{U}}
\newcommand{\ve}{\varepsilon}
\newcommand{\eps}{\varepsilon}
\newcommand{\rp}{\mathbb{R}_{\geq 0}}
\newcommand{\fa}{\ \forall \, }
\newcommand{\ex}{\ \exists \, }
\newcommand{\rbl}{\left (}
\newcommand{\rbr}{\right )}
\newcommand{\sbl}{\left [}
\newcommand{\sbr}{\right ]}
\newcommand{\nl}{\left\|}
\newcommand{\nr}{\right\|}
\newcommand{\cbl}{\left\lbrace }
\newcommand{\cbr}{\right\rbrace }
\newcommand{\Norm}[2][ ]{\nl #2 \nr_{#1}}
\newcommand{\SNorm}[1]{\Norm[\infty]{#1}}
\newcommand{\GL}{\text{GL}}
\newcommand{\Rp}{\R_{\geq0}}
\DeclareMathOperator{\esssup}{\rm ess\,sup}
\DeclareMathOperator*{\loc}{loc}
\DeclareMathOperator*{\rf}{ref}
\title{\LARGE \bf
Funnel MPC with feasibility constraints for nonlinear systems with arbitrary relative degree*
}
\author{Thomas Berger$^{1}$ and Dario Dennst\"adt$^{2}$
\thanks{*T.~Berger acknowledges funding by the Deutsche Forschungsgemeinschaft (DFG, German Research Foundation) -- Project-ID 471539468. D.~Dennst\"adt gratefully thanks the Technische Universit\"at Ilmenau and the Free State of Thuringia for their financial support as part of the Th\"uringer Graduiertenf\"orderung.}
\thanks{$^{1}$Thomas Berger is with the Universit\"at Paderborn, Institut f\"ur Mathematik, Warburger Str.~100, 33098~Paderborn, Germany
        {\tt\small thomas.berger@math.upb.de}}%
\thanks{$^{2}$Dario Dennst\"adt is with the Technische Universit\"at Ilmenau, Institut für Mathematik, Weimarer Str.~25, 98693 Ilmenau, Germany
        {\tt\small dario.dennstaedt@tu-ilmenau.de}}%
}
\begin{document}

\maketitle
\thispagestyle{empty}
\pagestyle{empty}

\begin{abstract}
We study tracking control for nonlinear systems with known relative degree and stable internal dynamics by the recently introduced technique of Funnel MPC. The objective is to achieve the evolution of the tracking error within a prescribed performance funnel. We propose a novel stage cost for Funnel MPC, extending earlier designs to the case of arbitrary relative degree, and show that the control objective as well as initial and recursive feasibility are always achieved~-- without requiring any terminal conditions or a sufficiently long prediction horizon. We only impose an additional feasibility constraint in the optimal control problem.
\end{abstract}

\section{Introduction}

In the recent work~\cite{berger2019learningbased} a novel model predictive control (MPC) scheme, so called Funnel MPC (FMPC), was proposed, which is able to achieve tracking with a prescribed performance of the tracking error. MPC is an established control technique which relies on the successive
solution of optimal control problems (OCPs), see e.g.~\cite{grune2017nonlinear,rawlings2017model}. Since it is able to take control and state constraints directly into account, it is nowadays widely used and helpful in various applications, see e.g.~\cite{QinBadg03}.

FMPC resolves the issue of requiring suitable terminal conditions (costs and constraints) in
the OCP (cf.~\cite{rawlings2017model} and the references therein) or a sufficiently long prediction
horizon (cf.~\cite{boccia2014stability}) in order to achieve recursive feasibility. This is achieved by a ``funnel-like'' stage cost, which penalizes the tracking error and grows unbounded when it approaches the funnel boundary. However, in the
FMPC scheme proposed in~\cite{berger2019learningbased} output constraints were
incorporated in the~OCP. It was then shown in~\cite{BergDenn21} that for the case of relative degree
one systems these constraints are superfluous and the funnel-inspired stage costs automatically
ensure initial and recursive feasibility. A generalization of these results to systems with relative
degree two was outlined in~\cite{Denn22}, however requiring a sufficiently long prediction horizon. In the present paper we extend the results
from~\cite{BergDenn21} to systems with arbitrary relative degree by designing a suitable stage cost
function, which is inspired by a recent funnel control design from~\cite{Berg22}. We emphasize that this extension is not straightforward, since the proof of initial and recursive feasibility relies on results from adaptive control, where the obstacle of higher relative degree is an omnipresent issue~\cite{Mors96}.

The concept of funnel control was developed in the seminal work~\cite{IlchRyan02b} (see also the recent survey in~\cite{BergIlch21}) and proved advantageous in a variety of applications such as control of industrial servo-systems~\cite{Hack17}, underactuated multibody systems~\cite{BergDrue21,BergOtto19}, peak inspiratory pressure~\cite{PompWeye15}, adaptive cruise control~\cite{BergRaue20} and even the control of infinite-dimensional systems such as a boundary controlled heat equation~\cite{ReisSeli15b}, a moving water tank~\cite{BergPuch22} and defibrillation processes of the human heart~\cite{BergBrei21}. We like to stress that, in contrast to MPC, funnel control does not use a model of the system, the funnel control input is determined by the instantaneous values of the system state and cannot ``plan ahead''. This often results in unnecessarily high control values and a rapidly changing control signal. Numerical simulations from~\cite{berger2019learningbased,BergDenn21} show that FMPC exhibits a considerably better controller performance than funnel control.

We like to note that together with the novel stage cost function that we propose for FMPC the OCP contains an
additional feasibility constraint at the point of the succeeding state evaluation (a similar
condition was present in~\cite{berger2019learningbased}) to guarantee recursive feasibility.
However, we do not incorporate the output constraints over the whole horizon in the~OCP.

\subsection{Nomenclature}

In the following let $\N$ denote the natural numbers, $\N_0 = \N \cup\{0\}$, and $\R_{\ge 0} =[0,\infty)$. By $\|x\|$ we denote the Euclidean norm of $x\in\R^n$, and $\GL_n(\R)$ is the group of invertible $\R^{n\times n}$ matrices. For some interval $I\subseteq\R$, some $V\subseteq\R^m$ and $k\in\N$, $L^\infty(I, \R^{n})$ $\big(L^\infty_{\rm loc} (I, \R^{n})\big)$ is the Lebesgue space of measurable, (locally) essentially bounded functions $f\colon I\to\R^n$ with norm $\|f \|_{\infty} = \esssup_{t \in I} \|f(t)\|$, $W^{k,\infty}(I,  \R^{n})$ is the Sobolev space of all functions
$f:I\to\R^n$ with $k$-th order weak derivative $f^{(k)}$ and $f,f^{(1)},\ldots,f^{(k)}\in L^\infty(I, \R^{n})$, and  $C^k(V,  \R^{n})$ is the set of  $k$-times continuously differentiable functions  $f:  V  \to \R^{n}$, with $C(V,  \R^{n}) := C^0(V,  \R^{n})$.

\subsection{System class}\label{Sec:SysClass}
We consider nonlinear systems of the form
\begin{equation}\label{eq:Sys}
    \begin{aligned}
        \dot{x}(t)  & = f(x(t)) + g(x(t)) u(t),\quad x(t^0)=x^0,\\
        y(t)        & = h(x(t)),
    \end{aligned}
\end{equation}
with~$t^0\in\R_{\ge 0}$, $x^0\in\R^n$, and nonlinear functions~$f:\R^n\to \R^n$, $g:\R^n\to \R^{n\times
m}$ and $h : \R^n \to \R^m$. For an input $u \in L^\infty_{\rm loc}(\R_{\geq 0},\R^m)$ the system~\eqref{eq:Sys} has a solution in the sense of \textit{Carath\'{e}odory}, that is a function $x:[t^0,\omega)\to\R^n$,
$\omega>t^0$, with $x(t^0)=x^0$ which is absolutely continuous and
satisfies the ODE in~\eqref{eq:Sys} for almost all $t\in [t^0,\omega)$. A solution $x$ is said to be maximal, if it has no right extension that is also a solution. The \textit{response} associated with $u$ is any maximal solution of~\eqref{eq:Sys} and denoted by~$x(\cdot;t^0,x^0,u)$; it is unique
if the right-hand side of~\eqref{eq:Sys} is locally Lipschitz in~$x$.

We recall the notion of relative degree for system~\eqref{eq:Sys}, see e.g.~\cite[Sec. 5.1]{Isid95}.
Assuming that $f,g,h$ are sufficiently smooth, the Lie derivative of~$h$ along~$f$ is defined by $\rbl L_f h\rbr(x) = h'(x) f(x)$, and successively we define~$L_f^k h = L_f (L_{f}^{k-1} h)$ with~$L_f^0 h = h$.
Furthermore, for the matrix-valued function~$g$ we have
\[
    (L_gh)(x) = \sbl (L_{g_1}h)(x), \ldots, (L_{g_m}h)(x) \sbr,
\]
where~$g_i$ denotes the~$i$-th column of~$g$ for $i=1,\ldots, m$. Then system~\eqref{eq:Sys} is said to have \emph{(global) relative degree}~$r \in \mathbb{N}$, if
\begin{align*}
    \fa k \in \{1,\ldots,r-1\}\fa x \in \R^n:\
        (L_g L_f^{k-1} h)(x)  = 0&  \\
      \text{and}\quad
        (L_g L_f^{r-1} h)(x)  \in \GL_m(\R).&
\end{align*}
If~\eqref{eq:Sys} has relative degree~$r$, then, under the additional assumptions provided in~\cite[Cor.~5.6]{ByrnIsid91a}, system~\eqref{eq:Sys} can be transformed into Byrnes-Isidori form. We assume existence of this transformation in the following, but emphasize that its knowledge is not required for the controller design~-- it is only a tool for the proof of Theorem~\ref{Thm:FunnnelMPC}.

\begin{ass}\label{Ass1} System~\eqref{eq:Sys} has relative degree~$r$ and there exists a diffeomorphism~$\Phi:\R^n\to\R^n$ such that the coordinate transformation $(y(t), \dot y(t),\ldots, y^{(r-1)}(t),\eta(t)) = \Phi(x(t))$ puts the system~\eqref{eq:Sys} into Byrnes-Isidori form
\begin{subequations}\label{eq:BIF}
    \begin{align}
        y^{(r)}(t) &= p\big( y(t), \dot y(t),\ldots, y^{(r-1)}(t),\eta(t)\big)\notag \\
        &\quad + \gamma\big( y(t), \dot y(t),\ldots, y^{(r-1)}(t),\eta(t)\big)\,u(t),
        \label{eq:output_dyn}\\
        \dot \eta(t) &= q\big( y(t), \dot y(t),\ldots, y^{(r-1)}(t),\eta(t)\big),\label{eq:zero_dyn}
    \end{align}
\end{subequations}
where $p:\R^n\to \R^{m}$, $q: \R^n\to \R^{n-rm}$, $\gamma = L_g L_f^{r-1} h:\R^n\to\R^{m\times m}$ are continuously differentiable and $(y(t^0),\dot y(t^0),\ldots, y^{(r-1)}(t^0),\eta(t^0)) =  \Phi(x^0)$. 
\end{ass}

Note that under Assumption~\ref{Ass1} the derivatives of the output~$y$ of~\eqref{eq:Sys} are given by
$y^{(i)}(t) = (L_f^i h)(x(t))$ for $i=0,\ldots,r-1$. In virtue of this we define the map
\begin{equation}\label{eq:chi}
 \hspace*{-2mm}   \chi\!:\!\R^n\!\to\!\R^{rm},\, x\!\mapsto\! \big( h(x), (L_fh)(x),\ldots, (L_f^{r-1}h)(x)\big).
\end{equation}
We further require the following assumption. 

\begin{ass}\label{Ass2}
The internal dynamics~\eqref{eq:zero_dyn} satisfy the following \emph{bounded-input, bounded-state} (BIBS) condition:
\begin{multline}\label{eq:BIBO-ID}
        \fa c_0 >0  \ex c_1 >0  \fa t^0\ge 0 \fa  \eta^0\in\R^{n-rm} \\
       \fa  \zeta\in L^\infty_{\loc}([t^0,\infty),\R^{rm}):\ \Norm{\eta^0}+
        \SNorm{\zeta}  \leq c_0\\ \implies\ \SNorm{\eta (\cdot;t^0,\eta^0,\zeta)} \leq c_1,
\end{multline}
where $\eta (\cdot;t^0, \eta^0,\zeta):[t^0,\infty)\to\R^{n-rm}$ denotes
the unique global solution of~\eqref{eq:zero_dyn} when $(y,\ldots,y^{(r-1)})$ is substituted by~$\zeta$. Note that in view of condition~\eqref{eq:BIBO-ID} the maximal solution $\eta (\cdot;t^0,
\eta^0,\zeta)$ can indeed be extended to a global solution.
\end{ass}

\begin{defn}
We say that the system~\eqref{eq:Sys} belongs to the system class $\cN^{m,r}$, written $(f, g, h) \in\cN^{m,r}$, if it satisfies Assumptions~\ref{Ass1} and~\ref{Ass2}.
\end{defn}

\subsection{Control objective} \label{Sec:CO}

The objective is to design a control strategy such that, with reference to Fig.~\ref{Fig:funnel}, for a given reference
trajectory~$y_{\rf}\in W^{r,\infty}(\Rp,\R^{m})$ the tracking error~$t\mapsto e(t):=y(t)-y_{\rf}(t)$ evolves within the prescribed performance funnel
\begin{align*}
    \cF_\psi:= \setdef{(t,e)\in \Rp\times\R^{m}}{\Norm{e} < \psi(t)}.
\end{align*}
This funnel is determined as $\psi = \psi_1$ by the solution of the following system of differential equations
\begin{equation}\label{eq:funnels}
\begin{aligned}
    \dot \psi_i(t) &= - \alpha_i \psi_i(t) + \beta_i +  p_i \left( \psi_{i+1}(t)- \tfrac{\beta_{i+1}}{\alpha_{i+1}}\right),\\
    \psi_i(0)&= \psi_i^0,\qquad\qquad\qquad i=1,\ldots,r-1,\\
    \dot \psi_r(t) &= -\alpha_r \psi_r(t) + \beta_r,\quad
    \psi_r(0) = \psi_r^0,
\end{aligned}
\end{equation}
where the design parameters
\begin{equation}\label{eq:FC-param}
\begin{aligned}
    &\alpha_1\!>\!\alpha_2\!>\!\ldots\!>\!\alpha_r\!>\!0,\ p_i\!>\!1\ \text{ for } i=1,\ldots,r\!-\!1, \\
    &\beta_i\!>\!0,\  \psi_i^0\!>\!\tfrac{\beta_i}{\alpha_i}\ \text{ for } i=1,\ldots,r
\end{aligned}
\end{equation}
can be chosen as desired. Typically, the specific application dictates the constraints on the tracking error and thus indicates suitable choices for those parameters.
\vspace*{-2mm}
 \begin{figure}[h]
  \begin{center}
\begin{tikzpicture}[scale=0.35]
\tikzset{>=latex}
  \filldraw[color=gray!25] plot[smooth] coordinates {(0.15,4.7)(0.7,2.9)(4,0.4)(6,1.5)(9.5,0.4)(10,0.333)(10.01,0.331)(10.041,0.3) (10.041,-0.3)(10.01,-0.331)(10,-0.333)(9.5,-0.4)(6,-1.5)(4,-0.4)(0.7,-2.9)(0.15,-4.7)};
  \draw[thick] plot[smooth] coordinates {(0.15,4.7)(0.7,2.9)(4,0.4)(6,1.5)(9.5,0.4)(10,0.333)(10.01,0.331)(10.041,0.3)};
  \draw[thick] plot[smooth] coordinates {(10.041,-0.3)(10.01,-0.331)(10,-0.333)(9.5,-0.4)(6,-1.5)(4,-0.4)(0.7,-2.9)(0.15,-4.7)};
  \draw[thick,fill=lightgray] (0,0) ellipse (0.4 and 5);
  \draw[thick] (0,0) ellipse (0.1 and 0.333);
  \draw[thick,fill=gray!25] (10.041,0) ellipse (0.1 and 0.333);
  \draw[thick] plot[smooth] coordinates {(0,2)(2,1.1)(4,-0.1)(6,-0.7)(9,0.25)(10,0.15)};
  \draw[thick,->] (-2,0)--(12,0) node[right,above]{\normalsize$t$};
  \draw[thick,dashed](0,0.333)--(10,0.333);
  \draw[thick,dashed](0,-0.333)--(10,-0.333);
  \node [black] at (0,2) {\textbullet};
  \draw[->,thick](4,-3)node[right]{\normalsize$\lambda$}--(2.5,-0.4);
  \draw[->,thick](3,3)node[right]{\normalsize$(0,e(0))$}--(0.07,2.07);
  \draw[->,thick](9,3)node[right]{\normalsize$\psi(t)$}--(7,1.4);
\end{tikzpicture}
\end{center}
 \vspace*{-2mm}
 \caption{Error evolution in a funnel $\mathcal F_{\psi}$ with boundary $\psi(t)$.}
 \label{Fig:funnel}
 \end{figure}
\vspace*{-4mm}

\section{Funnel MPC scheme}

In this section we define the novel FMPC algorithm, which extends~\cite[Alg.~2.7]{BergDenn21} to systems with arbitrary relative degree, and we prove that it is initially and recursively feasible. To this end, we first define, for any solution $(\psi_1,\ldots,\psi_r)$ of~\eqref{eq:funnels}, $y_{\rf}\in W^{r,\infty}(\Rp,\R^{m})$, $t\ge 0$ and $\zeta = (\zeta_1,\ldots,\zeta_r)\in\R^{rm}$,
\begin{equation}\label{eq:aux-ei}
\begin{aligned}
    e_1(t,\zeta) &:= \zeta_1 - y_{\rm ref}(t),\\
    e_{i+1}(t,\zeta) &:= \zeta_{i+1} - y_{\rm ref}^{(i)}(t) + k_i(t,\zeta) e_i(t,\zeta),\\
    k_i(t,\zeta) &:= \left(1- \tfrac{\|e_i(t,\zeta)\|^2}{\psi_i(t)^2}\right)^{-1}
\end{aligned}
\end{equation}
for $i=1,\ldots,r-1$. Then we propose, with design parameter ${\lambda_u\in\Rp}$, the new \textit{stage cost function} $\ell$ defined in~\eqref{eq:stageCostFunnelMPC}. The terms~$\frac {1}{1- \Norm{e_i(t,\zeta)}^2/\psi_i(t)^2}$ penalize the distance of the auxiliary error variables~$e_i$ defined in~\eqref{eq:aux-ei}  to the funnel boundaries~$\psi_i$, whereas the parameter $\lambda_u$ influences the penalization of the control input. Note that $e_1 = y -y_{\rm ref}$.

\begin{figure*}[h!tb]
\begin{equation}\label{eq:stageCostFunnelMPC}
\ell:\Rp\times\R^{rm}\times\R^{m}\to\R\cup\{\infty\}, \
        (t,\zeta,u) \mapsto
        \begin{cases}
           \sum\limits_{i=1}^r  \frac {1}{1- \Norm{e_i(t,\zeta)}^2/\psi_i(t)^2}-r + \lambda_u \Norm{u}^2 & \Norm{e_i(t,\zeta)} \!\neq\! \psi_i(t)\ \forall\, i=1,\ldots,r\\
            \infty,&\text{else}.
        \end{cases}
\end{equation}
\vspace{-8mm}
\end{figure*}

The cost function~$\ell$ is motivated by the following recent result on funnel control
from~\cite[Cor.~3.3]{Berg22}, which is tailored to the present framework.

\begin{prop}\label{Prop:FC}
Consider a system~\eqref{eq:BIF} which satisfies condition~\eqref{eq:BIBO-ID} and $\gamma(x)\in\GL_m(\R)$ for all $x\in\R^n$. Choose $t^0\in\Rp$, funnel design parameters as in~\eqref{eq:FC-param} and let $(\psi_1,\ldots,\psi_r)$ be a global solution of~\eqref{eq:funnels}. Then for all ${K,\xi >0}$ there exist $\hat \eps_1,\ldots,\hat \eps_r\in (0,1)$ such that for all $\eps_i\in [\hat \eps_i, 1)$, $i=1,\ldots,r$ there exists $M>0$ such that
\begin{itemize}
   \item for all $y_{\rm ref}\in W^{r,\infty}([t^0\infty),\R^m)$ with $\|y_{\rm ref}^{(i)}\|_\infty \le K$, $i=0,\ldots,r$,
  \item for all $y^i\in\R^m$ with $\|e_i(t^0,y^1,\ldots,y^r)\|\le \eps_i \psi_i(t^0)$ for $i=1,\ldots,r$, and
  \item for all $\eta^0\in\R^{n-rm}$ with $\|\eta^0\| \le \xi$ and $\hat \eta:= \eta(t^0;0,\eta^0,\zeta)$ for some $\zeta\in C([0,t^0],\R^{rm})$ with $\|e_i(t,\zeta_1(t),\ldots,\zeta_r(t))\|\le \eps_i \psi_i(t)$ for all $t\in [0,t^0]$ and $\zeta_i(t^0) = y^i$ for $i=1,\ldots,r$,
\end{itemize}
the application of the controller
\begin{align*}
    u(t) &= -k_r(t,Y(t)) \gamma\big(Y(t),\eta(t)\big)^{-1} e_r(t,Y(t)),\\
    Y(t) &= (y(t),\ldots,y^{(r-1)}(t)),
\end{align*}
to~\eqref{eq:BIF}, where 
$k_r$ is defined as in~\eqref{eq:aux-ei}, leads to a closed-loop initial value problem with initial conditions $y^{(i-1)}(t^0) = y^i$ for $i=1,\ldots,r$, $\eta(t^0) = \hat \eta$, which has
a solution, every solution can be maximally extended and every maximal solution~$(y,\eta):[t^0,\omega)\to\R^{m}$, $\omega\in(t^0,\infty]$, is global (i.e., $\omega=\infty$) and satisfies
\begin{enumerate}[(i)]
  \item $y\in W^{r,\infty}([t^0,\infty),\R^m)$ and $k_i\in L^\infty([t^0,\infty),\R)$ for $i=1,\ldots,r$;
  \item $u\in L^\infty([t^0,\infty),\R^m)$ with $\|u(t)\|\le M$ for all $t\ge t^0$,
  \item $\|e_i(t,y(t),\ldots,y^{(r-1)}(t))\|\le \eps_i \psi_i(t)$ for all $t\ge t^0$ and all $i=1,\ldots,r$.\hfill \raggedright $\diamond$
\end{enumerate}
\end{prop}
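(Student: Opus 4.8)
\emph{Proof plan.} The proposition is (a reformulation, tailored to the present coordinates and assumptions, of) the backstepping funnel-control result \cite[Cor.~3.3]{Berg22}, so the quickest route is to verify that the hypotheses of that result hold and that the additional data $(\eta^0,\zeta)$ is absorbed by Assumption~\ref{Ass2}; below I sketch the underlying argument. The plan is to (a) establish existence of closed-loop solutions and that every maximal solution lives on some $[t^0,\omega)$ with $\Norm{e_i(t)}<\psi_i(t)$; (b) derive the backstepping error cascade for $e_1,\dots,e_r$; (c) show by a first-escape-time argument that the tube bounds $\Norm{e_i(t)}\le\eps_i\psi_i(t)$ persist, using the funnel design \eqref{eq:funnels}--\eqref{eq:FC-param} and the BIBS bound for $\eta$ from \eqref{eq:BIBO-ID}; and (d) conclude $\omega=\infty$ together with (i)--(iii).

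\emph{Local existence and error cascade.} Because $\hat\eps_i<1$, the prescribed initial data satisfy $\Norm{e_i(t^0,y^1,\dots,y^r)}<\psi_i(t^0)$ for all $i$, hence lie in the open set on which the gains $k_i$ of \eqref{eq:aux-ei} — and therefore the right-hand side of the closed loop, obtained by substituting the controller into \eqref{eq:BIF} — are defined and continuous in $(t,Y,\eta)$; Peano's theorem yields a solution, every solution extends to a maximal one, and every maximal solution $(y,\eta)\colon[t^0,\omega)\to\R^m\times\R^{n-rm}$ obeys $\Norm{e_i(t)}<\psi_i(t)$ on $[t^0,\omega)$ and leaves every compact subset of that set as $t\uparrow\omega$. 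Writing $e_i(t):=e_i(t,y(t),\dots,y^{(r-1)}(t))$, likewise $k_i(t)$, $Y:=(y,\dots,y^{(r-1)})$, and using $y^{(r)}=p(Y,\eta)+\gamma(Y,\eta)u$ from \eqref{eq:output_dyn} with $u=-k_r\gamma(Y,\eta)^{-1}e_r$, differentiation of \eqref{eq:aux-ei} along the closed loop gives
\[
 \dot e_i = -k_ie_i + e_{i+1} + \delta_i\ \ (i<r),\qquad \dot e_r = -k_re_r + \delta_r,
\]
with $\delta_1=0$, $\delta_i=\ddt(k_{i-1}e_{i-1})$ for $1<i<r$, and $\delta_r=\ddt(k_{r-1}e_{r-1})+p(Y,\eta)-y_{\rm ref}^{(r)}$. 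An induction on $i$ shows that $\delta_i$ is a continuous function of $t$, of $e_1,\dots,e_i$, of $k_1,\dots,k_{i-1}$, of the bounded signals $\psi_j,\dot\psi_j$ ($j\le i$) and $y_{\rm ref}^{(0)},\dots,y_{\rm ref}^{(r)}$, and — for $i=r$ only — of $\eta$; in particular $\Norm{\delta_i}$ is bounded as soon as $e_1,\dots,e_i$ and $k_1,\dots,k_{i-1}$ (and $\eta$, if $i=r$) are bounded.

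\emph{Persistence of the tubes and globality.} I claim $\Norm{e_i(t)}\le\eps_i\psi_i(t)$ on $[t^0,\omega)$ for every $i$. If not, let $t_1\in(t^0,\omega)$ be the first time with $\max_i\Norm{e_i(t)}/(\eps_i\psi_i(t))=1$, attained at an index $i^\star$. On $[t^0,t_1]$ all tube bounds hold, so $k_i\le(1-\eps_i^2)^{-1}$ there; inverting \eqref{eq:aux-ei} bounds $Y$ on $[t^0,t_1]$, and concatenating the past trajectory $\zeta$ on $[0,t^0]$ with $Y$ on $[t^0,t_1]$ and invoking Assumption~\ref{Ass2} together with $\Norm{\eta^0}\le\xi$ bounds $\eta$ on $[t^0,t_1]$ by a constant independent of $t_1$ — here it is essential that the bound in \eqref{eq:BIBO-ID} is uniform in the initial time and the time horizon. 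Hence $\Norm{\delta_{i^\star}(t_1)}$ is bounded by an explicit constant depending only on $K$, $\xi$, $\eps_1,\dots,\eps_{i^\star}$ and the funnel parameters. Now evaluate $\ddt(\Norm{e_{i^\star}}^2/\psi_{i^\star}^2)$ at $t_1$, where $\Norm{e_{i^\star}(t_1)}=\eps_{i^\star}\psi_{i^\star}(t_1)$ and thus $k_{i^\star}(t_1)=(1-\eps_{i^\star}^2)^{-1}$: using $\Norm{e_{i^\star+1}(t_1)}\le\eps_{i^\star+1}\psi_{i^\star+1}(t_1)$ if $i^\star<r$ (the case $i^\star=r$ being analogous, with $\delta_r$ controlled through the $\eta$-bound), the Cauchy--Schwarz inequality, and the $\psi_{i^\star}$-equation of \eqref{eq:funnels}, the restoring term $-\eps_{i^\star}^2\psi_{i^\star}^2/(1-\eps_{i^\star}^2)$ is seen to dominate all remaining terms provided $\hat\eps_{i^\star}$ is close enough to $1$: one uses $p_{i^\star}>1$ (so that, for $\eps_{i^\star}$ near $1$, the coupling through $\psi_{i^\star+1}$ carries the favourable sign), $\alpha_{i^\star}>0$, and the uniform bounds $\beta_{i^\star}/\alpha_{i^\star}<\psi_{i^\star}(t)\le\sup_s\psi_{i^\star}(s)<\infty$ granted by \eqref{eq:FC-param}. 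This yields $\ddt(\Norm{e_{i^\star}}^2/\psi_{i^\star}^2)(t_1)<0$, contradicting the choice of $t_1$ (the borderline case $t_1=t^0$ is ruled out in the same way). Hence the tube bounds hold on all of $[t^0,\omega)$; consequently $k_1,\dots,k_r$ are bounded, $u=-k_r\gamma(Y,\eta)^{-1}e_r$ is bounded, say by $M:=\sup_{t\ge t^0}\Norm{u(t)}$; $\eta$ is bounded by Assumption~\ref{Ass2}; and $y^{(r)}=p(Y,\eta)-k_re_r$ is bounded. Thus the closed-loop trajectory stays in a compact subset of the set from the previous paragraph, forcing $\omega=\infty$, and (i)--(iii) are precisely what has been shown.

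\emph{Expected main obstacle.} The delicate point is the constant bookkeeping in the tube-persistence step: since $\delta_i$ involves derivatives of the unbounded gains $k_j$, $j<i$, the thresholds $\hat\eps_i$ (and $M$) must be selected in the right order — low indices first, depending on $K$, $\xi$ and the funnel parameters — and the circular dependence between ``all $e_i$ bounded'' and ``$\eta$ bounded'' (the latter needed to bound $\delta_r$) must be broken by the first-escape-time device. This is exactly the higher-relative-degree difficulty highlighted in the introduction \cite{Mors96}; it is absorbed by the cascaded funnel construction \eqref{eq:funnels}, while the genuinely new ingredient compared with \cite[Cor.~3.3]{Berg22} is the treatment of the internal state $\eta$ and of the past trajectory $\zeta$ via Assumption~\ref{Ass2}.
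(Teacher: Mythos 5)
Your proposal takes essentially the same route as the paper: Proposition~\ref{Prop:FC} is not proved from scratch but obtained by tailoring \cite[Cor.~3.3]{Berg22}, the only genuinely new point being that the initial data $(\eta^0,\zeta)$ and the parameter $\xi$ are absorbed through the BIBS condition~\eqref{eq:BIBO-ID} (a uniform bound for the operators $\zeta\mapsto\eta(\cdot;0,\eta^0,\zeta)$ on bounded sets), which you identify correctly. Your sketch of the error cascade and first-escape-time argument is consistent with the cited construction; the paper merely adds that the explicit formulas for $\hat\eps_1,\ldots,\hat\eps_r$ and $M$ from \cite{Berg22} require the substitutions $N(s)=-s$ and $\sup_{x\in C}\|\gamma(x)^{-1}\|$ over a suitable compact set $C$.
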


Note that compared to~\cite[Cor.~3.3]{Berg22} the parameter $\xi$, on which $M$ depends, is new and defines a bounded set for the initial values of the internal dynamics. Nevertheless, the proof from~\cite{Berg22} can still be applied when the operators $T_{\eta^0} : \zeta \mapsto \eta(\cdot;0,\eta^0,\zeta)$ are considered and it is observed that by~\eqref{eq:BIBO-ID} a uniform bound for those operators (depending on~$\xi$) on any bounded set in $L^\infty_{\loc}(\Rp,\R^{rm})$ is provided.

Further note that the proof of~\cite[Cor.~3.3]{Berg22} is constructive and explicit expressions for the numbers $\hat \eps_1,\ldots,\hat \eps_r$ and $M = M(\eps_1,\ldots,\eps_r,K,\xi)$ are given, which we do not repeat here (and which require a slight but straightforward modification utilizing $N(s) = -s$ and $\sup_{x\in C} \|\gamma(x)^{-1}\|$ over an appropriate compact set $C\subseteq\R^n$).

Based on the cost function~$\ell$ from~\eqref{eq:stageCostFunnelMPC} and inspired by Proposition~\ref{Prop:FC}, we may define the FMPC algorithm as follows.

\begin{algo}[FMPC]\label{Algo:MPCFunnelCost}\ \\
    \textbf{Given:} System~\eqref{eq:Sys},  funnel design parameters as in~\eqref{eq:FC-param} and a global solution $(\psi_1,\ldots,\psi_r)$ of~\eqref{eq:funnels}, reference signal ${y_{\rf}\in W^{r,\infty}(\Rp,\R^{m})}$, $M>0$, $\eps=(\eps_1,\ldots,\eps_r)\in(0,1)^r$, $t^0\in\Rp$ and
    \begin{align}\label{eq:DefD0}
        x^0 \in \cD^{\ve}_{t^0}:= \setdef{x\in\R^n\!}{\!\!\!\begin{array}{l} \|e_i(t^0,\chi(x))\|\le \eps_i \psi_i(t^0)\\
        \text{for all } i=1,\ldots,r \end{array}\!\!\!}
    \end{align}
     for $\chi$ as in~\eqref{eq:chi}, and stage cost function~$\ell$ as in~\eqref{eq:stageCostFunnelMPC}.\\
    \textbf{Set} the time shift $\delta >0$, the prediction horizon $T\geq\delta$ and initialize the current time
        $\hat t :=t^0$.\\
    \textbf{Steps:}
    \begin{enumerate}[(a)]
    \item\label{agostep:FunnelMPCFirst}
        Obtain a measurement of the state at~$\hat t$ and set $\widehat x :=x(\hat t)$.
    \item
        Compute a solution $u^{\star}\in L^\infty([\hat t,\hat t +T],\R^{m})$ of the Optimal
        Control Problem (OCP)
    \begin{equation}\label{eq:OCP}
    \begin{alignedat}{2}
            &\!\mathop
            {\operatorname{minimize}}_{u\in L^{\infty}([\hat t,\hat t+T],\R^{m})}  && \
                \int_{\hat t}^{\hat t+T} \ell\big(t,\zeta(t),u(t)\big) {\rm d}t \\
            &\quad \text{subject to}       &\ \ & \zeta(t) = \chi\big(x(t;\hat t, \widehat x,u)\big),     \\
            &                        && \Norm{u(t)}  \leq M\ \text{ for } t\in [\hat t,\hat t +T],\\
            &                        && \|e_i(\hat t+\delta,\zeta(\hat t+\delta))\| \le  \eps_i \psi_i(\hat t+\delta),\\
            &&& i=1,\ldots,r
    \end{alignedat}
    \end{equation}
    \item Apply the feedback law
        \begin{equation}\label{eq:FMPC-fb}
            \mu:[\hat t,\hat t+\delta)\times\R^n\to\R^m, \quad \mu(t,\widehat x) =u^{\star}(t)
        \end{equation}
        to system~\eqref{eq:Sys}.
        Increase $\hat t$ by $\delta$ and go to Step~\eqref{agostep:FunnelMPCFirst}.

    \end{enumerate}
\end{algo}

Note that in the OCP~\eqref{eq:OCP} the last~$r$ inequalities constitute a feasibility constraint on the output~$y$ and its first~${r-1}$ derivatives, which resembles the constraint used in~\cite[Eq.~(9)]{berger2019learningbased}.

In the following main result we show that for suitable $M>0$ and $\eps \in (0,1)^r$ the FMPC Algorithm~\ref{Algo:MPCFunnelCost} is initially and recursively feasible for every prediction horizon ${T>0}$ and that it guarantees the evolution of the tracking error within the performance funnel $\cF_{\psi_1}$.

\begin{thm}\label{Thm:FunnnelMPC}
Consider a system~\eqref{eq:Sys} with $(f,g,h)\in\cN^{m,r}$. Choose funnel design parameters as in~\eqref{eq:FC-param} and let
$(\psi_1,\ldots,\psi_r)$ be a global solution of~\eqref{eq:funnels}. Let 
\begin{itemize}
\item $K,\xi >0$, $\eps = (\eps_1,\ldots,\eps_r)\in(0,1)^r$ and $M = M(\eps,K,\xi)$ as in Prop.~\ref{Prop:FC},
\item $y_{\rf}\in W^{r,\infty}(\Rp,\R^{m})$  such that $\|y_{\rm ref}^{(i)}\|_\infty \le K$
for $i=0,\ldots,r$,
\item $t^0\in\Rp$ and $B\subset\cD^{\ve}_{t^0}$ be a bounded set such that for all $x^0\in B$ we have that $(\zeta^0,\eta^0) = \Phi(x^0)$ satisfies $\|\eta^0\|
\le \xi$.
\end{itemize}
Then the FMPC
    Algorithm~\ref{Algo:MPCFunnelCost} with $\delta>0$ and $T\ge \delta$ is initially and
    recursively feasible for every $x^0\in B$, i.e., at time $\hat t = t^0$ and at each
    successor time $\hat t\in t^0+\delta\N$ the OCP~\eqref{eq:OCP}
    has a solution. In particular, the closed-loop system consisting of~\eqref{eq:Sys} and the FMPC feedback~\eqref{eq:FMPC-fb} has a (not necessarily unique) global solution $x:[t^0,\infty)\to\R^n$ and the corresponding input is given by
    \[
        u_{\rm FMPC}(t) = \mu(t,x(\hat t)),\quad t\in [\hat t,\hat t+\delta),\ \hat t\in t^0+\delta\N.
    \]
    Furthermore, each global solution~$x$ with corresponding input $u_{\rm FMPC}$ satisfies:
    \begin{enumerate}[(i)]
        \item\label{th:item:BoundedInput}
$\fa t\ge t^0:\ \Norm{u_{\rm FMPC}(t)}\leq M$.
        \item\label{th:item:ErrorInFunnel} $\fa t\ge t^0:\ \|e_i(t,\chi(x(t)))\| < \psi_i(t)$; in particular the error $e=y-y_{\rf}$ evolves within the funnel
            $\cF_{\psi_1}$, i.e., $\Norm{e(t)} < \psi_1(t)$ for all $t\ge t^0$.
    \end{enumerate}
\end{thm}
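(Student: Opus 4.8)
The plan is to exploit that the stage cost~\eqref{eq:stageCostFunnelMPC} and the OCP~\eqref{eq:OCP} are built so that the funnel feedback of Proposition~\ref{Prop:FC}, restarted on each receding horizon, is always an admissible candidate for the OCP. I would work throughout in the Byrnes--Isidori coordinates $(\zeta,\eta)=\Phi(x)$ from Assumption~\ref{Ass1}, since the objective, the control bound and the terminal constraint in~\eqref{eq:OCP} see only $\zeta=\chi(x)$ and the internal state. The backbone is an induction over the grid $\hat t\in t^0+\delta\N_0$ with the claim: at each such $\hat t$ the pair consisting of the output data $\chi(x(\hat t))$ and the internal state $\eta(\hat t)$ satisfies the hypotheses of Proposition~\ref{Prop:FC} at initial time $\hat t$. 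Granting the claim, feasibility of~\eqref{eq:OCP} at $\hat t$ is immediate: the response of the funnel feedback, restricted to $[\hat t,\hat t+T]$, obeys $\|u(t)\|\le M$ and the terminal inequalities $\|e_i(\hat t+\delta,\cdot)\|\le\eps_i\psi_i(\hat t+\delta)$ by items (ii) and (iii) of Proposition~\ref{Prop:FC}, and it gives a bounded integrand because $1-\|e_i(t,\cdot)\|^2/\psi_i(t)^2\ge 1-\eps_i^2>0$ there; that the OCP then attains a minimizer $u^\star$ I would obtain by standard direct-method arguments (weak-$*$ compactness of the $M$-ball in $L^\infty$, continuous dependence of $x(\cdot;\hat t,\widehat x,\cdot)$ on $u$, and lower semicontinuity and boundedness below of $\ell$).

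For the base case I would verify the hypotheses of Proposition~\ref{Prop:FC} at $\hat t=t^0$ for $x^0\in B$: the inclusion $x^0\in\cD^\eps_{t^0}$ gives $\|e_i(t^0,\chi(x^0))\|\le\eps_i\psi_i(t^0)$, the definition of $B$ gives $\|\eta^0\|\le\xi$ (and, for $t^0>0$, a witnessing admissible past for $x^0$), the reference bounds $\|y_{\rm ref}^{(i)}\|_\infty\le K$ hold by assumption, and $M=M(\eps,K,\xi)$ is chosen accordingly. Applying the minimizer on $[t^0,t^0+\delta)$ produces a piece of closed-loop trajectory that exists on all of $[t^0,t^0+T]$ (finiteness of the cost forces the response to exist on the whole horizon), and iterating the induction step below then yields a global, not necessarily unique, solution $x:[t^0,\infty)\to\R^n$ carrying the stated input $u_{\rm FMPC}$.

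In the induction step, assuming~\eqref{eq:OCP} solvable at $\hat t$ and $x$ continued by $u^\star$ on $[\hat t,\hat t+\delta)$, two facts are recorded. The terminal constraint gives $x(\hat t+\delta)\in\cD^\eps_{\hat t+\delta}$, the ``output part'' of the hypotheses at $\hat t+\delta$. And finiteness of the optimal cost gives $\|e_i(t,\chi(x(t)))\|<\psi_i(t)$ for all $t\in[\hat t,\hat t+T]$ and all $i$, proved by an inner induction on $i$: on the compact horizon $x$ stays in a compact set and $\|u^\star\|\le M$, so $x$ is Lipschitz; then $t\mapsto e_1(t,\chi(x(t)))$ is Lipschitz, so a touch of the boundary would make $1/(1-\|e_1\|^2/\psi_1^2)$ blow up like $1/|t-t_\ast|$, contradicting integrability of $\ell$; and once $\|e_i\|<\psi_i$ is known, $k_i(\cdot,\chi(x(\cdot)))$ is Lipschitz and bounded on the horizon, whence $e_{i+1}(\cdot,\chi(x(\cdot)))$ is Lipschitz and the same estimate applies. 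Since the horizons $[\hat t,\hat t+T]$, $\hat t\in t^0+\delta\N_0$, cover $[t^0,\infty)$, this is exactly assertion~\eqref{th:item:ErrorInFunnel} (with $e=e_1$ for the tracking-error statement), and assertion~\eqref{th:item:BoundedInput} is just the control bound in~\eqref{eq:OCP}.

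The step I expect to be the real obstacle is the missing half of the claim: writing $\eta(\hat t+\delta)$ in the form required by Proposition~\ref{Prop:FC}, i.e.\ $\eta(\hat t+\delta)=\eta(\hat t+\delta;0,\eta^0,\zeta)$ for some $\eta^0$ with $\|\eta^0\|\le\xi$ and a \emph{tightened} witness $\zeta\in C([0,\hat t+\delta],\R^{rm})$ that ends at $\chi(x(\hat t+\delta))$. This is where Assumption~\ref{Ass2} is used, and it is delicate precisely because of the higher relative degree: the auxiliary errors $e_2,\dots,e_r$ contain the gains $k_j$, which blow up at the funnel boundary, so that a uniform bound on $\eta$ via~\eqref{eq:BIBO-ID} needs the output history to remain inside the \emph{tightened} funnel, whereas~\eqref{eq:OCP} enforces $\|e_i\|\le\eps_i\psi_i$ only at the single future time $\hat t+\delta$. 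My plan here is to propagate such a tightened witness inductively along the grid---keeping $\eta^0$ equal to the internal part of $\Phi(x^0)$ and concatenating the previous witness with a tightened output history on $[\hat t,\hat t+\delta]$ chosen to reproduce the true $\eta$-evolution, using the structure of~\eqref{eq:zero_dyn} together with the parameter choices~\eqref{eq:FC-param}. Once this admissibility is secured, Proposition~\ref{Prop:FC} supplies a feasible candidate at $\hat t+\delta$, the OCP is solvable there, the induction closes, and this is exactly the higher-relative-degree difficulty flagged in the introduction---the one point where the extension beyond~\cite{BergDenn21} is genuinely nontrivial.
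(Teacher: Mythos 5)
Your overall architecture is the same as the paper's: use the funnel controller of Proposition~\ref{Prop:FC} as a feasible candidate at each grid point, show that finiteness of the cost already forces the trajectory to respect all $r$ funnels on the whole horizon, and then run a direct-method/weak-compactness argument (the paper works with weak convergence in $L^2$ and gets uniform boundedness of the responses from Lemma~\ref{Lem:appendix} plus Assumption~\ref{Ass2}). However, your inner induction in the ``finite cost $\Rightarrow$ funnel invariance'' step has a genuine gap. You propose to establish $\|e_1(t,\chi(x(t)))\|<\psi_1(t)$ on the whole horizon first, arguing that a touch of the boundary makes the first summand of $\ell$ non-integrable. But the stage cost~\eqref{eq:stageCostFunnelMPC} is the \emph{sum} of the terms $\bigl(1-\|e_i\|^2/\psi_i^2\bigr)^{-1}$, and for any index whose error has already left its funnel this term is \emph{negative} (and can be arbitrarily negative near $\|e_i\|=\psi_i$ from outside), so $\int\ell<\infty$ does not control $\int\bigl(1-\|e_1\|^2/\psi_1^2\bigr)^{-1}$ unless you already know the other errors are still inside their funnels --- which is exactly what your induction has not yet proved. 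The paper avoids this circularity by defining $\tilde t$ as the \emph{first} time at which \emph{any} $e_i$ touches its boundary: on $[\hat t,\tilde t)$ every summand is $\ge 1$, so the sum dominates each term and $\int\ell^+<\infty$ yields the needed integrability. Moreover, to make the Lipschitz/integrability contradiction work at $\tilde t$ one must bound the gains $k_1,\dots,k_{r-1}$ on the \emph{closed} interval $[\hat t,\tilde t]$, where the errors may a priori approach their boundaries; mere pointwise strict inequality does not suffice there. The paper obtains the required uniform gaps $\|e_i(t)\|\le\delta_i\psi_i(t)$, $\delta_i<1$, $i=1,\dots,r-1$, from the structural funnel-control argument behind Lemma~\ref{Lem:appendix} (Steps 3--6 and 9 of the proof of Theorem~3.1 in the cited funnel-control reference), which exploits the differential relations among the $e_i$ and the parameter choices~\eqref{eq:FC-param}. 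This ingredient is absent from your sketch and cannot be replaced by compactness alone.

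The second problematic point is your plan for verifying the internal-dynamics hypothesis of Proposition~\ref{Prop:FC} at the successor times: you want a witness $\zeta$ on $[\hat t,\hat t+\delta]$ satisfying the \emph{tightened} bounds $\|e_i(t,\zeta(t))\|\le\eps_i\psi_i(t)$ that nevertheless ``reproduces the true $\eta$-evolution''. That is not achievable in general: $\eta(\hat t+\delta)$ is determined through~\eqref{eq:zero_dyn} by the actual output history, which is only guaranteed to satisfy the open bounds $\|e_i\|<\psi_i$, and you cannot alter the driving signal without altering the internal state it produces. The concern you are addressing is legitimate, but the resolution goes the other way: the only role of the witness in Proposition~\ref{Prop:FC} is to confine the initial internal state to a bounded set so that a single constant $M$ works, and along any admissible closed-loop history the output data $\chi(x(\cdot))$ is bounded by Lemma~\ref{Lem:appendix} (the open funnel bounds suffice for this), whence the BIBS condition~\eqref{eq:BIBO-ID} bounds $\eta(\hat t)$ uniformly; this is precisely what the remark after Proposition~\ref{Prop:FC} about the operators $T_{\eta^0}$ is for. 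So replace the tightened-witness construction by the boundedness argument via Lemma~\ref{Lem:appendix} and Assumption~\ref{Ass2}, and restructure the funnel-invariance step around the first common touching time together with the uniform-gap estimates for $e_1,\dots,e_{r-1}$.
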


The proof is relegated to Appendix~\ref{App:PROOF}.

\section{Simulation}\label{Sec:Sim}

To illustrate the proposed FMPC scheme, we consider the mass-on-car system introduced in~\cite{SeifBlaj13}, where on a car with mass~$m_1$ (in kg) a ramp is mounted on which a mass~$m_2$ (in kg), coupled to the car by a spring-damper-component with spring constant~$k > 0$ (in N/m) and damping~$d>0$ (in Ns/m), passively moves; a control force~$F = u$ (in N) can be applied to the car.
The situation is depicted in Fig.~\ref{Fig:Mass-on-a-car}.
\begin{figure}[h!]
\begin{center}
\includegraphics[trim=2cm 4cm 5cm 15cm,clip=true,width=5.5cm]{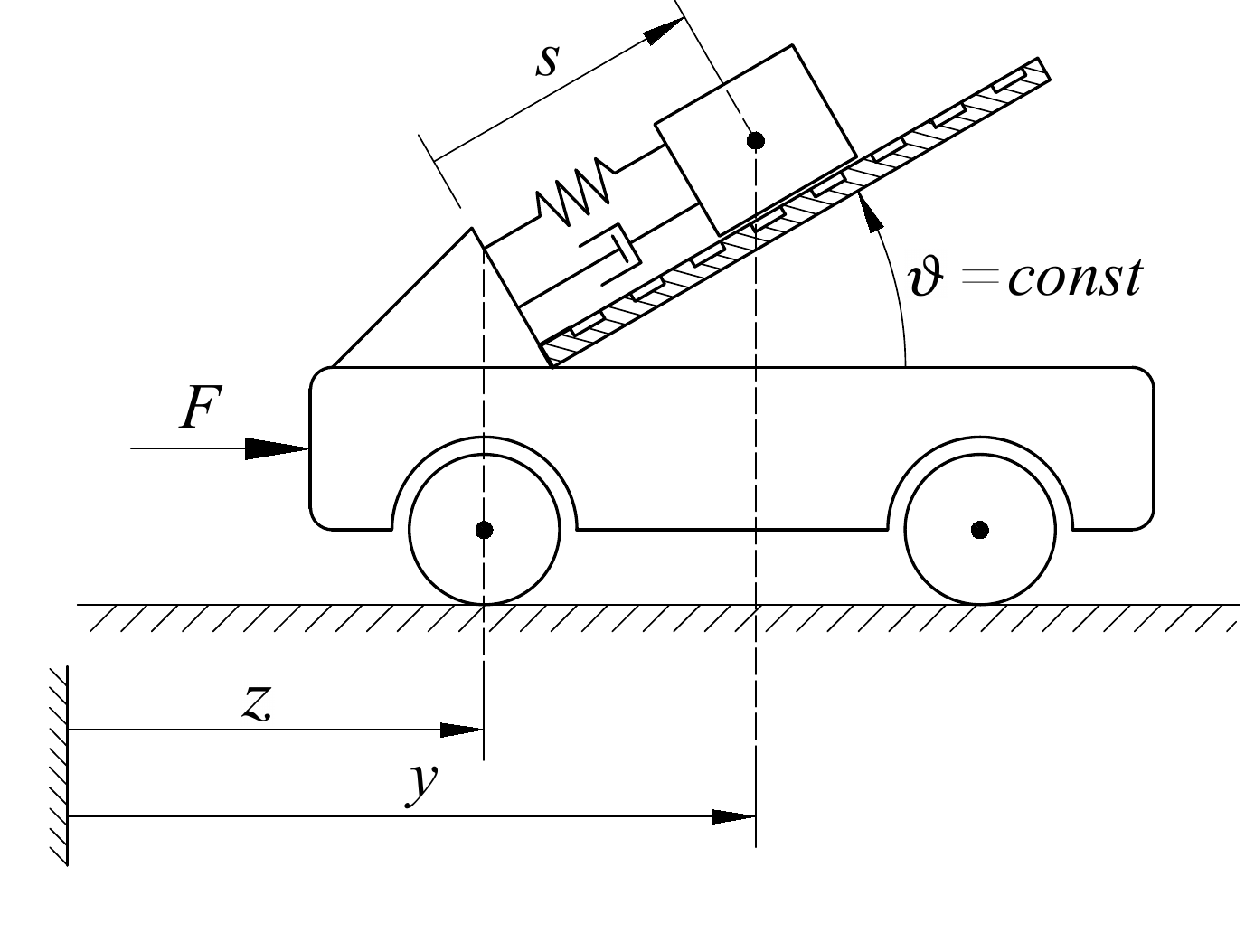}
\end{center}
\vspace{-4mm}
\caption{Mass-on-car system.}
\label{Fig:Mass-on-a-car}
\end{figure}
The equations of motion for the system read
\begin{subequations} \label{eq:Mass-on-car-system}
\begin{equation}  \label{eq:Mass-on-car}
\begin{bmatrix}
m_1 + m_2 \!&\! m_2 \cos(\vartheta)
\\ m_2 \cos(\vartheta) \!&\! m_2
\end{bmatrix}\!\!
\begin{pmatrix}
\ddot z(t) \\
\ddot s(t)
\end{pmatrix} \!+\!
\begin{pmatrix}
0 \\
ks(t) \!+\! d \dot s(t)
\end{pmatrix} \!=\!
\begin{pmatrix}
u(t) \\
0
\end{pmatrix}\!\!,
\end{equation}
with the horizontal position of the second mass~$m_2$ as output
\begin{equation} \label{eq:Mass-on-car-output}
y(t) = z(t) + \cos(\vartheta) s(t).
\end{equation}
\end{subequations}
For the simulation we choose the parameters $m_1 = 4$, ${m_2 = 1}$, $k=2$, $d = 1$, $\vartheta = \pi/4$ and the initial values $z(0)=s(0)=\dot z(0) = \dot s(0) = 0$.
The objective is tracking of the reference signal~$y_{\rm ref}: \rp \to \R$, $t \mapsto \cos(t)$ so that the error $e(t) = y(t) - y_{\rm ref}(t)$ satisfies $\|e(t)\|\le \psi_1(t)$ for the solution $(\psi_1,\psi_2)$ of~\eqref{eq:funnels} for the parameters
\begin{align*}
    \alpha_1 &= 1.5, \alpha_2 = 0.9\cdot \alpha_1, \beta_1 = 0.15, \beta_2 = 0.5\cdot \alpha_2, \\
    p_1 &=  1.1, \psi_1^0 = 4.1, \psi_2^0 = 2,
\end{align*}
which are chosen as in~\cite{Berg22}. As outlined in~\cite[Sec.~3]{BergIlch21} for the above parameters system~\eqref{eq:Mass-on-car-system}
belongs to the class $\cN^{1,2}$, in particular the relative degree is two.
We compare the FMPC Algorithm~\ref{Algo:MPCFunnelCost} with OCP~\eqref{eq:OCP}
to the FMPC scheme from~\cite{BergDenn21}.
For Algorithm~\ref{Algo:MPCFunnelCost} we choose, according to the procedure provided in the proof of~\cite[Thm.~3.2]{Berg22} and rounded to the second decimal
place, $\eps_1 = 0.94$ and $\eps_2 = 0.99$. Since the simulation of FMPC in~\cite{BergDenn21} generated control values below~15, we choose $M=15$. Due to discretisation, only step functions with constant step length $0.04$ are considered for the
OCP~\eqref{eq:OCP}. The prediction horizon
and  time shift are selected as $T=0.6$ and $\delta=0.04$, resp. We further choose the parameter
$\lambda_u=\tfrac{1}{100}$ for the stage cost~$\ell$. The parameters $T$, $\delta$ and $\lambda_u$ are chosen as in \cite{BergDenn21}. All simulations are performed
on the time interval $[0,10]$ with the MATLAB routines \texttt{ode45}  and \texttt{fmincon}
and are depicted in Fig.~\ref{Fig:SimulationFunnelMPC}.
Fig.~\ref{Fig:SimulationOutputError} shows the tracking error due to the two different FMPC schemes
evolving within the funnel boundaries given by~$\psi_1$, while
the respective control signals are displayed in
Fig.~\ref{Fig:SimulationControlInput}.
It is evident that both control schemes achieve the evolution of the tracking error within the
performance boundaries given by~$\psi_1$. However, the FMPC Algorithm~\ref{Algo:MPCFunnelCost} with
OCP~\eqref{eq:OCP} requires less input action than the FMPC scheme
from~\cite{BergDenn21}. This superior performance is a consequence of the fact that the stage cost~$\ell$ not only penalizes the distance of the error~$e=e_1$ to~$\psi_1$, but also the distance of~$e_2 = \dot e + k_1 e$ to~$\psi_2$.
\begin{figure}[ht] \centering
    \begin{subfigure}[b]{0.45\textwidth}
     \centering
        \includegraphics[width=8.0cm]{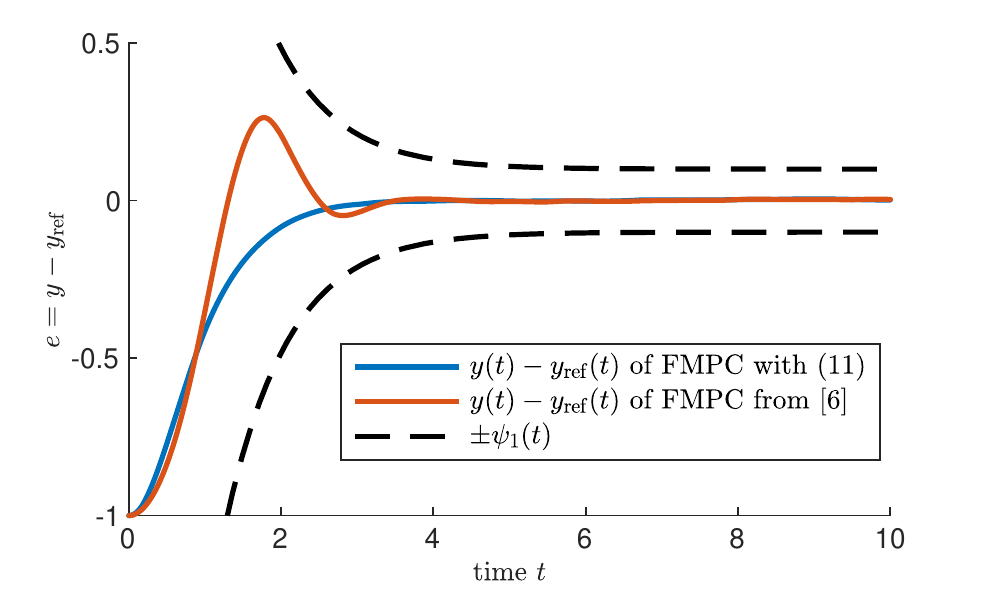}
        \caption{Tracking error~$e$ and funnel boundary~$\psi_1$}
        \label{Fig:SimulationOutputError}
    \end{subfigure}
\end{figure}
\begin{figure}[ht]\ContinuedFloat
    \centering
    \begin{subfigure}[b]{0.45\textwidth}
        \centering
        \includegraphics[width=8.0cm]{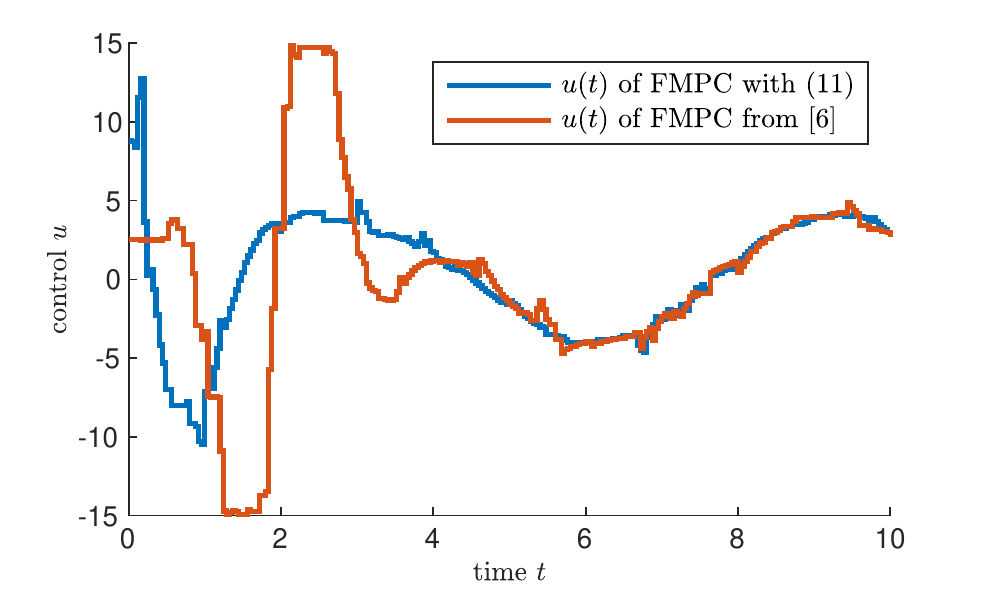}
        \caption{Control input}
        \label{Fig:SimulationControlInput}
    \end{subfigure}
    \caption{Simulation of system~\eqref{eq:Mass-on-car-system} under FMPC Algorithm~\ref{Algo:MPCFunnelCost} and FMPC from~\cite{BergDenn21}}
    \label{Fig:SimulationFunnelMPC}
\end{figure}

\section{Conclusion}

In the present paper we proposed a novel stage cost for FMPC and proved that the resulting FMPC Algorithm~\ref{Algo:MPCFunnelCost} is initially and recursively feasible. This extends earlier approaches from~\cite{berger2019learningbased,BergDenn21} to nonlinear systems with arbitrary relative degree and belonging to the system class~$\cN^{m,r}$. Although we didn't require any terminal conditions or a sufficiently long prediction horizon (as in~\cite{Denn22} for relative degree two), we imposed an additional feasibility constraint in the OCP~\eqref{eq:OCP}. This constraint does not only restrict the set of admissible controls, but the required parameters $\eps_1,\ldots,\eps_r\in(0,1)$ and $M>0$ provided by Proposition~\ref{Prop:FC} are usually quite conservative and hard to compute. Further research should focus on relaxing the OCP by removing the feasibility constraints.

\appendix

\subsection{A preliminary lemma}

\numberwithin{thm}{subsection}

\begin{lem}\label{Lem:appendix}
Consider a system~\eqref{eq:Sys} with ${(f,g,h)\in\cN^{m,r}}$. Choose funnel design parameters as in~\eqref{eq:FC-param} and let
$(\psi_1,\ldots,\psi_r)$ be a global solution of~\eqref{eq:funnels}. Let $t^0\in\Rp$, $T>0$ and $y_{\rm ref}\in W^{r,\infty}([t^0,\infty),\R^m)$. Further let ${x^0\in\R^n}$ be such that $\|e_i(t^0,\chi(x^0))\|<\psi_i(t^0)$ for all $i=1,\ldots,r$. Then there exists $Y>0$ such that for all ${u\in L^\infty([t^0,t^0+T],\R^m)}$ for which $x(t;t^0,x^0,u)$ satisfies~\eqref{eq:Sys} and $\|e_i(t,\chi(x(t;t^0,x^0,u)))\|<\psi_i(t)$ for all ${t\in[t^0,t^0+T]}$ and $i=1,\ldots,r$, we have $\|y^{(i-1)}(t)\|\le Y$ for all $t\in[t^0,t^0+T]$ and $i=1,\ldots,r$.
\end{lem}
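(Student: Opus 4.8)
\emph{Proof strategy.} First I reduce the claim to a uniform bound on the gains $k_i$. By Assumption~\ref{Ass1}, along any response one has $\chi(x(t)) = (y(t),\dot y(t),\ldots,y^{(r-1)}(t))$, and since the relative degree is $r$ it follows that $\ddt y^{(i-1)}(t) = y^{(i)}(t)$ for $i=1,\ldots,r-1$; hence $y,\dot y,\ldots,y^{(r-1)}$ are absolutely continuous on $[t^0,t^0+T]$, and so are $t\mapsto e_i(t):=e_i(t,\chi(x(t)))$ and $t\mapsto k_i(t):=k_i(t,\chi(x(t)))$ (the latter since $\|e_i\|<\psi_i$ keeps the denominator in~\eqref{eq:aux-ei} bounded away from $0$). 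From~\eqref{eq:aux-ei} we have $y^{(i-1)}(t) = e_i(t) + y_{\rm ref}^{(i-1)}(t) - k_{i-1}(t)e_{i-1}(t)$ (with the convention $k_0 e_0:=0$), and $\|e_i(t)\|<\psi_i(t)$ by hypothesis, so the claim reduces to bounding the scalar functions $k_1,\ldots,k_{r-1}$ on $[t^0,t^0+T]$ by constants depending only on the data $t^0,T,y_{\rm ref},x^0$ and $(\psi_1,\ldots,\psi_r)$ and not on $u$. Here I use that $\psi_i$, $\dot\psi_i$, $y_{\rm ref}^{(i-1)}$ are bounded on the compact interval, and that the cascade structure of~\eqref{eq:funnels} together with~\eqref{eq:FC-param} forces $\psi_i(t)>\beta_i/\alpha_i>0$; write $\underline\psi_i,\bar\psi_i,D_i$ for the ensuing positive lower bound of $\psi_i$, upper bound of $\psi_i$, and upper bound of $|\dot\psi_i|$ on $[t^0,t^0+T]$.

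The heart of the proof is an induction on $j=1,\ldots,r-1$ producing data constants $\bar e_j,\bar E_j,\bar K_j,\bar S_j\ge 0$ and $\bar W_j\in(0,1)$, with $\bar k_j:=(1-\bar W_j)^{-1}$, such that on $[t^0,t^0+T]$ we have $\|e_j\|\le\bar e_j$, $\|e_j\|^2/\psi_j^2\le\bar W_j$ (hence $k_j\le\bar k_j$), $\|\dot e_j\|\le\bar E_j$ and $|\dot k_j|\le\bar K_j$. The engine is the mechanism behind Proposition~\ref{Prop:FC}: differentiating~\eqref{eq:aux-ei} along the trajectory and using $y^{(j)}-y_{\rm ref}^{(j)}=e_{j+1}-k_je_j$ yields $\dot e_j=e_{j+1}-k_je_j+S_j$, where $S_1:=0$ and, for $j\ge 2$, $S_j:=\ddt(k_{j-1}e_{j-1})$, which the inductive hypothesis bounds by $\bar S_j:=\bar K_{j-1}\bar e_{j-1}+\bar k_{j-1}\bar E_{j-1}$ (set $\bar S_1:=0$). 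Putting $W_j:=\|e_j\|^2/\psi_j^2\in[0,1)$ and using $\|e_{j+1}\|<\bar\psi_{j+1}$ and $\|e_j\|<\psi_j$, a direct computation gives, for a.e.\ $t\in[t^0,t^0+T]$,
\[
  \dot W_j(t) \;<\; \frac{2(\bar\psi_{j+1}+\bar S_j+D_j)}{\underline\psi_j} - \frac{2W_j(t)}{1-W_j(t)} \;=:\; A_j - \frac{2W_j(t)}{1-W_j(t)},
\]
with $A_j$ a data constant. Hence $\dot W_j<0$ whenever $W_j>w_j^\ast:=A_j/(A_j+2)$, so the standard level‑crossing argument forces $W_j(t)\le\bar W_j:=\max\{W_j(t^0),w_j^\ast\}<1$ on $[t^0,t^0+T]$, where $W_j(t^0)=\|e_j(t^0,\chi(x^0))\|^2/\psi_j(t^0)^2<1$ is fixed data. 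This yields $\bar k_j$; then $\bar e_j:=\bar\psi_j$ and $\bar E_j:=\bar\psi_{j+1}+\bar k_j\bar e_j+\bar S_j$ follow from $\dot e_j=e_{j+1}-k_je_j+S_j$, and $\bar K_j:=\bar k_j^2\big(2\bar e_j\bar E_j/\underline\psi_j^2+2\bar e_j^2D_j/\underline\psi_j^3\big)$ follows from $\dot k_j=k_j^2\dot W_j$ together with a two‑sided estimate of $\dot W_j$ — closing the step. Crucially $\bar S_{j+1}$ involves only level‑$j$ constants, so there is no circularity.

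Once $k_1,\ldots,k_{r-1}$ are bounded by data constants, the identity $y^{(i-1)}(t)=e_i(t)+y_{\rm ref}^{(i-1)}(t)-k_{i-1}(t)e_{i-1}(t)$ gives $\|y^{(i-1)}(t)\|\le\bar\psi_i+\|y_{\rm ref}^{(i-1)}\|_\infty+\bar k_{i-1}\bar\psi_{i-1}$ for $i=1,\ldots,r$ (with $\bar k_0\bar\psi_0:=0$), so $Y$ can be taken as the maximum of these $r$ numbers. The hard part will be executing the induction cleanly: verifying the recursion $\dot e_j=e_{j+1}-k_je_j+S_j$ and the estimate on $S_j$, keeping every constant strictly at the previous level to avoid circularity, and making the differential‑inequality/funnel‑preservation step rigorous for a merely absolutely continuous trajectory. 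Note that only the relative‑degree structure of Assumption~\ref{Ass1} enters this lemma; the BIBS condition of Assumption~\ref{Ass2} is not needed.
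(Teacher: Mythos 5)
Your proof is correct and follows essentially the same route as the paper: the paper likewise reduces the claim to showing that $e_1,\ldots,e_{r-1}$ remain a factor $\eps_i<1$ inside their funnels (hence the gains $k_i$ are bounded) and then estimates $y^{(i-1)}=e_i+y_{\rm ref}^{(i-1)}-k_{i-1}e_{i-1}$ exactly as you do, the only difference being that the paper outsources the funnel-invariance induction to Steps~3--6 and~9 of the proof of~\cite[Thm.~3.1]{Berg22}, whereas you carry it out explicitly via the differential inequality for $W_j$. Your recursion $\dot e_j=e_{j+1}-k_je_j+\ddt(k_{j-1}e_{j-1})$ carries the correct sign on the last term (the paper's displayed formula has a minus sign there, which is immaterial for the estimates), and your observation that Assumption~\ref{Ass2} is not needed for this lemma is consistent with the paper's proof.
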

\begin{proof}
For brevity we identify $e_i(t) = e_i(t,\chi(x(t;t^0,x^0,u)))$ and observe that the relations~\eqref{eq:aux-ei} imply that the differential equations
\[
    \dot e_i(t) = e_{i+1}(t) - \gamma_i(t) - \dot \gamma_{i-1}(t),\quad i=1,\ldots,r-1,
\]
where $\gamma_0(t) = 0$ and $\gamma_i(t) = \frac{e_i(t)}{1-\|e_i(t)\|^2/\psi_i(t)^2}$, are satisfied. From those it follows from a repetition of Steps~3--6 and~9 of the proof of~\cite[Thm.~3.1]{Berg22} that there exist ${\eps_1,\ldots,\eps_{r-1}\in (0,1)}$, which only depend on~$x^0$ and the parameters in~\eqref{eq:FC-param}, such that $\|e_i(t)\|\le \eps_i \psi_i(t)$ for all $t\in[t^0,t^0+T]$ and $i=1,\ldots,r-1$. From this,~\eqref{eq:aux-ei}, the assumption and the monotony of~$\psi_i$ it follows directly that
\[
    \|y^{(i-1)}(t)\| < \psi_{i}(t^0) + \tfrac{\psi_{i-1}(t^0)}{1-\eps_{i-1}^2} + \|y_{\rm ref}^{(i-1)}\|_\infty =: Y_i
\]
for all $t\in[t^0,t^0+T]$ and $i=1,\ldots,r$, where $\psi_0 := 0$ and $\eps_0 := 0$. With $Y:= \max_{i=1,\ldots,r} Y_i$ the proof is complete.
\end{proof}

\subsection{Proof of Theorem~\ref{Thm:FunnnelMPC}}\label{App:PROOF}
Let $T\ge \delta$ be arbitrary but fixed.
For $t\geq t^0$ we define in addition to $\cD^{\ve}_{t}$ as in~\eqref{eq:DefD0} the set
\[
     \cD_{t}:= \setdef{x\in\R^n}{ \|e_i(t,\chi(x))\| < \psi_i(t)\
        \fa i=1,\ldots,r}.
\]
For $\hat{t}\geq 0$ we denote by $I_{\hat{t}}^T$ the interval $[\hat{t},\hat{t}+T]$ and further for
$\hat{x}\in\cD_{\hat{t}}$ by $\cU(\hat{t},\hat{x})$ the set
\begin{align*}\label{eq:SetControls}
    \setdef
    {\!\!u\!\in\! L^\infty(I_{\hat{t}}^T,\R^m)\!\!}
    { \!\!
        \begin{array}{l}
                x(t;\hat{t},\hat{x},u)\text{ satisfies \eqref{eq:Sys} and}\\
                x(t;\hat{t},\hat{x},u)\in\cD_t\text{ for all } t\in I_{\hat{t}}^T,\\
                x(\hat{t}+\delta;\hat{t},\hat{x},u)\in\cD^{\ve}_{\hat{t}+\delta}, \SNorm{u}\!\!\le M
        \end{array}
    \!\!\!\!}\!.
\end{align*}
This is the set of all $L^\infty$-controls $u$ bounded by $M$ which, if applied to
system~\eqref{eq:Sys}, guarantee that the error signals~$e_i(t,\chi(x(t;\hat{t},\hat{x},u)))$ evolve
within their respective funnels on the interval $I_{\hat{t}}^T$ and moreover
${x(\hat{t}+\delta;\hat{t},\hat{x},u)\in\cD^{\ve}_{\hat{t}+\delta}}$.
By Proposition~\ref{Prop:FC} we have that $\cU(t^0,x^0)\neq\emptyset$ for all $x^0\in B$. Furthermore, for any $\hat t\ge t^0$ we have that $\cU(\hat{t},x(\hat{t};t^0,x^0,u))\neq\emptyset$ for all $u\in L^\infty([t^0,\hat t],\R^m)$ such that $x(\hat{t};t^0,x^0,u)\in \cD^{\ve}_{\hat{t}}$.

In the following we show that if $\cU(\hat{t},\hat{x})$ is non-empty for some $\hat t\ge t^0$ and $\hat x\in\cD^{\ve}_{\hat{t}}$, then the
OCP~\eqref{eq:OCP} has a solution~$u^{\star}\in\cU(\hat{t},\hat{x})$~---
this proves the theorem. To this end, we assume $\hat{x}\in\cD^{\ve}_{\hat{t}}$ in the following. The proof consists of several steps and follows the idea of~\cite[Thms.~4.3~\&~4.6]{BergDenn21}.

\textit{Step 1:}
We show that for $u\in\cU(\hat{t},\hat{x})$, the function $\ell\big(\cdot,\zeta(\cdot),u(\cdot)\big)$ with
$\zeta(\cdot)=\chi(x(\cdot;\hat{t},\hat{x},u))$ is positive on $I_{\hat{t}}^T$ and
$\int_{I_{\hat{t}}^T}\ell(t,\zeta(t),u(t)){\rm d}{t}<\infty$.
By $u\in\cU(\hat{t},\hat{x})$ we have $x(t;\hat{t},\hat{x},u)\in\cD_t$ for all $t\in I_{\hat{t}}^T$.
Therefore, $\Norm{e_i(t,\zeta(t))}^2<\psi_i(t)^2$ for all $t\in I_{\hat{t}}^T$ and all
$i=1,\ldots,r$. Due to the compactness of $I_{\hat{t}}^T$ and the continuity of $\zeta$, $e_i$,
$\psi_i$, there exists $\delta>0$
with $\Norm{e_i(t,\zeta(t))}^2/\psi_i(t)^2<1-\delta$ for all $t\in I_{\hat{t}}^T$ and all
$i=1,\ldots,r$.
Hence, $\ell\big(t,\zeta(t),u(t)\big)\geq0$ for all $t\in I_{\hat{t}}^T$ and
\begin{align*}
    \int_{I_{\hat{t}}^T}&\ell(t,\zeta(t),u(t)){\rm d}{t}\!\\
    &=\!\int_{I_{\hat{t}}^T}\!
        \sum\limits_{i=1}^r\tfrac {1}{1- \Norm{e_i(t,\zeta(t))}^2/\psi_i(t)^2}
            -r \!+\! \lambda_u\!\Norm{u(t)}^2{\rm d}{t}\\
    &\leq \int_{I_{\hat{t}}^T}{ \tfrac{r}{\delta}  + \lambda_u \SNorm{u}^2}{\rm d}{t}
    \le \left(\tfrac{r}{\delta} + \lambda_u M^2\right) T
    <\infty.
\end{align*}

\textit{Step 2:}
We show that the set
\begin{align*}
    \setdef
    {\!\!u\!\in\! L^\infty(I_{\hat{t}}^T\!,\R^m)\!\!}
    { \!\!\!
        \begin{array}{l}
            x(t;\hat{t},\hat{x},u)\text{ satisfies \eqref{eq:Sys} for all } t\!\in\! I_{\hat{t}}^T\!,\\
                x(\hat{t}+\delta;\hat{t},\hat{x},u)\in\cD^{\ve}_{\hat{t}+\delta}, \SNorm{u}\!\!\le M,\\
                \int_{I^{T}_{\hat{t}}}\ell(t,\chi(x(t; \hat{t}, \hat{x} ,u)),u(t))\, {\rm d} t < \infty
        \end{array}
    \!\!\!\!}\!,
\end{align*}
denoted by $\tilde{\cU}(\hat{t},\hat{x})$, is a subset of $\cU(\hat{t},\hat{x})$. Let $u\in \tilde{\cU}(\hat{t},\hat{x})$ and set
$\zeta(\cdot)=\chi(x(\cdot;\hat{t},\hat{x},u))$.
The claim is proved by showing $\Norm{e_i(t,\zeta(t))}<\psi_i(t)$ for all $t\in I_{\hat{t}}^T$ and ${i=1,\ldots,r}$.
Since $\hat{x}\in\cD^{\ve}_{\hat{t}}$, we know $\Norm{e_i(\hat{t},\zeta(\hat{t}))}<\psi_i(\hat{t})$.
Assume there exists $t\in I^{T}_{\hat{t}}$ with
$\Norm{e_i(t,\zeta(t))}\geq\psi_i(t)$ for $i=1,\ldots,r$. By continuity of
$e_i$, $\zeta_i$, and $\psi_i$, there exists
\[
    \tilde{t}:=\min\setdef{ t\in I^T_{\hat{t}}}{ \ex i =1,\ldots,r: \Norm{e_i(t,\zeta(t))}=\psi_i(t)}.
\]
Let $j\in\cbl1,\ldots,r\cbr$ with $\Norm{e_j(\tilde{t},\zeta(\tilde{t}))}=\psi_j(\tilde{t})$.
Recalling the definition of the Lebesgue integral, see e.g.~\cite[Def.~11.22]{Rudi76},
$\int_{I^{T}_{\hat{t}}}\ell(t,\zeta(t),u(t))\,
{\rm d} t < \infty$ implies $\int_{I^{T}_{\hat{t}}}(\ell(t,\zeta(t),u(t)))^+\,
{\rm d} t < \infty$ where $(\ell(t,\zeta(t),u(t)))^+:=\max\cbl(\ell(t,\zeta(t),u(t))),0\cbr$.
Note that $\Norm{e_i(\tilde{t},\zeta(\tilde{t}))}<\psi_i(\tilde{t})$ for all $t\in[\hat{t},\tilde{t})$
and for all $i=1,\ldots, r$. Therefore,
\begin{align*}
    &\int_{\hat{t}}^{\tilde{t}}\tfrac {1}{1- \Norm{e_j(t,\zeta(t))}^2/\psi_j(t)^2}{\rm d}{t}
        \leq\int_{\hat{t}}^{\tilde{t}}\sum\limits_{i=1}^r\tfrac {1}{1- \Norm{e_i(t,\zeta(t))}^2/\psi_i(t)^2} {\rm d}{t}\!\\
        &\leq\!\!\int_{I^{T}_{\hat{t}}} \!
        \!\rbl\sum\limits_{i=1}^r\tfrac {1}{1- \Norm{e_j(t,\zeta(t))}^2/\psi_i(t)^2}\rbr^{\!+}\!\! {\rm d}{t}\!\\
    &\leq\!\!\int_{I_{\hat{t}}^T} \!
        \!\rbl \sum\limits_{i=1}^r\tfrac{1}{1- \Norm{e_i(t,\zeta(t))}^2/\psi_i(t)^2}
            -r + \lambda_u\Norm{u(t)}^2\!\!\rbr^{\!+}\!\!{\rm d}{t}+Tr\\
    &=\!\!\int_{I_{\hat{t}}^T}( \ell(t,\zeta(t),u(t)))^+\!{\rm d}{t}+Tr<\infty.
\end{align*}
As continuous functions $\zeta$ and $y^{(i)}_{\rm ref}$
are bounded on the compact interval~$[\hat{t},\tilde{t}]$ for all $i=0,\ldots, r$. For the diffeomorphism~$\Phi$ from Assumption~\ref{Ass1} we have that $\Phi(x(\cdot;\hat t,\hat x,u)) = (\zeta(\cdot),\eta(\cdot))$ on~$I_{\hat{t}}^T$ for some absolutely continuous $\eta:I_{\hat{t}}^T\to\R^{n-rm}$. As a consequence of Assumption~\ref{Ass2}, $\eta$ is bounded
on the interval~$[\hat{t},\tilde{t}]$.
Since the functions~$p$ and~$\gamma$ in \eqref{eq:output_dyn} are continuously differentiable,
$y^{(r)}$ is bounded on~$[\hat{t},\tilde{t}]$ as well, and hence $e^{(i)} = y^{(i)} - y^{(i)}_{\rm ref}$ is bounded for all $i=0,\ldots, r$. By definition of $\tilde t$ we have for $e_i(\cdot) := e_i(\cdot,\zeta(\cdot))$ that
$\Norm{e_i({t})}< \psi_i({t})$ for all $t\in[\hat{t},\tilde{t})$ and all $i=1,\ldots, r$. Then, by the same arguments as in the proof of Lemma~\ref{Lem:appendix}, there exist $\delta_1,\ldots,\delta_{r-1}\in (0,1)$, which only depend on~$\hat x$ and the parameters in~\eqref{eq:FC-param}, such that $\Norm{e_i({t}))}\le \delta_i \psi_i(t)$ for all $t\in[\hat{t},\tilde{t})$ and all $i=1,\ldots,r-1$ (and by continuity the inequality also holds for $t=\tilde t$). Then $k_i(\cdot) := k_i(\cdot,\zeta(\cdot))$ from~\eqref{eq:aux-ei} is bounded on~$[\hat{t},\tilde{t}]$ for $i=1,\ldots,r-1$.
Therefore, since
\begin{align*}
    \ddt \big(k_i(t)e_i(t)\big)\!=&2k_i(t)^2
    \!\rbl\!
    \tfrac{\Norm{e_i(t)}^2}{\psi_i(t)^3}\dot{\psi}_i(t)
    \!+\!\tfrac{e_i(t)^\top\dot{e}_i(t)}{\psi_i(t)^2}
    \!\rbr \!e_i(t)\\&+k_i(t)\dot{e}_i(t)
\end{align*}
for $i=1,\ldots,r-1$ and invoking boundedness of~$\psi_i$ and~$\dot \psi_i$ due to~\eqref{eq:funnels}, it follows by induction and from the relations~\eqref{eq:aux-ei} that $\dot{e}_i(\cdot)$ is essentially
bounded for all $i=1,\ldots,r-1$, where for $i=1$ we have that~$\dot{e}_1  = e_2 - k_1 e_1$ is bounded.  Furthermore, it is straightforward to see that $\dot e_r = e^{(r)} + \ddt (k_{r-1} e_{r-1})$ is bounded. In particular, we have shown that $\dot{e}_j$ is bounded and hence $e_j$ is Lipschitz continuous. Since $\dot \psi_j$ is bounded and $\psi_j(t)\ge \beta_j/\alpha_j$ it is also clear that $\ddt (1/\psi_j) = -\dot \psi_j/\psi_j^2$ is bounded, hence $1/\psi_j$ is Lipschitz continuous. Therefore, $1-\|e_j(\cdot)^2\|/\psi_j(\cdot)^2$ is a Lipschitz continuous function on the
interval~$[\hat{t},\tilde{t}]$, hence it follows from~\cite[Lem.~4.1]{BergDenn21} that  $1-\|e_j(\cdot)^2\|/\psi_j(\cdot)^2$ is strictly positive on the interval
$[\hat{t},\tilde{t}]$, contradicting the definition of~$\tilde t$. Hence
$\tilde{\cU}(\hat{t},\hat{x})\subseteq\cU(\hat{t},\hat{x})$.

\textit{Step 3:}
We show that the OCP~\eqref{eq:OCP} has a solution
${u^{\star}\in\cU(\hat{t},\hat{x})}$. It follows from Step~1 that
$\cU(\hat{t},\hat{x})\subseteq\tilde{\cU}(\hat{t},\hat{x})$ and together with Step~2 we have $\cU(\hat{t},\hat{x})=\tilde{\cU}(\hat{t},\hat{x})\neq \emptyset$, the latter by assumption. Solving the OCP~\eqref{eq:OCP} is
therefore equivalent to minimizing the function
\begin{align*}
    J&:L^\infty(I^{T}_{\hat{t}},\R^{m})\to\R\cup\cbl\infty\cbr,\quad\\
    &\ u\mapsto
        \begin{cases}
            \int_{I^{T}_{\hat{t}}}\ell(t,\chi(x(t;\hat{t},\hat{x},u)),u(t))\, {\rm
            d}t,&u\in\cU(\hat{t},\hat{x})\\
            \infty,&\text{else.}
        \end{cases}
\end{align*}
As a consequence of Step~1, $J(u)\geq0$ for all $u\in\cU(\hat{t},\hat{x})$.
Hence, the infimum $J^{\star}: = \inf_{u \in \cU(\hat{t},\hat{x})} J(u)$ exists.
Let $(u_k)\in(\cU(\hat{t},\hat{x}))^\N$ be a minimizing sequence, meaning $J(u_k)\to J^*$.
Since $L^\infty(I^{T}_{\hat{t}},\R^{m})\subset L^2(I^{T}_{\hat{t}},\R^{m})$ and $\SNorm{u_k}\leq M$
for all $k\in\N$, we conclude that $(u_k)$ is a bounded sequence in the Hilbert space $L^2$.
Hence, there exists $u^{\star}\in L^2(I^{T}_{\hat{t}},\R^{m})$ and a weakly convergent subsequence
${u_k\rightharpoonup u^{\star}}$ (which we do not relabel).
Let $(x_k):=(x(\cdot;\hat{t},\hat{x},u_k))\in C(I^T_{\hat{t}},\R^n)^\N$ be the sequence of
associated responses.
According to Lemma~\ref{Lem:appendix}, there exists $Y>0$ such that $\SNorm{\chi(x_k)}\leq Y$ for all
$k\in\N$. As in Step~2, let $\eta_k:I^{T}_{\hat{t}}\to\R^{n-rm}$ be such that $(\chi(x_k(\cdot)),\eta_k(\cdot))=\Phi(x_k(\cdot))$ and observe that $\eta_k(\cdot) = \eta(\cdot;\hat t, \eta_k(\hat t), \chi(x_k))$.
Since $\|\eta_k(\hat t)\| \le \|\Phi(\hat x)\|$, independent of~$k$, it follows from Assumption~\ref{Ass2} with $c_0 := Y + \|\Phi(\hat x)\|$ that there exists $c_1>0$ such that
$\SNorm{\eta_k}\leq c_1$ for all $k\in\N$.
Therefore, $x_k(t)$ is an element of the compact set
\[
    \Phi^{-1}\!\rbl
    \setdef
    {\!\!
        \begin{pmatrix}
            z_1\\
            z_2
        \end{pmatrix}\!\!
        \in\!\R^{rm}\!\times\!\R^{n-rm}\!
    }
    {\!
        \Norm{z_1}\!\le Y\wedge
        \Norm{z_2}\!\le c_1\!\!
    }
    \rbr
\]
for all $t\in I_{\hat{t}}^T$ and all $k\in\N$. Hence, $(x_k)$ is uniformly bounded.
Then, by a repetition of Steps~2--4 of the proof of~\cite[Thm.~4.6]{BergDenn21}, we may infer
that $(x_k)$ has a subsequence (which we do not relabel) that converges uniformly to $x^{\star}=x(\cdot;\hat{t},\hat{x},u^{\star})$ and $\SNorm{u^\star}\le M$.
Due to the continuity of~$\chi$ and $e_i$, the uniform convergence of~$(x_k)$ implies the pointwise
convergence of $\chi(x_k(\cdot))$ and $e_i(\cdot,\chi(x_k(\cdot)))$ for all $i=1,\ldots,r$. Thus,
$x(\hat{t}+\delta;\hat{t},\hat{x},u^{\star})\in\cD^{\ve}_{\hat{t}+\delta}$. It remains to show that $u^{\star}\in\cU(\hat{t},\hat{x})$ and $J(u^{\star})=J^\star$.
Again this follows along the lines of Steps~5--6 of the proof of~\cite[Thm.~4.6]{BergDenn21} and this completes the proof. \hfill $\blacksquare$

\bibliographystyle{IEEEtran}
\bibliography{references}
\end{document}